\date{}

\date{}

\title{Parabolic-hyperbolic dichotomy through half-plane coexistence}
\author{\'Ad\'am Tim\'ar }

\documentclass[11pt,naturalnames]{article}
\renewcommand\footnotemark{}
\usepackage[ruled,vlined]{algorithm2e}
\usepackage{amsmath}
\usepackage{amsthm}
\usepackage{amsfonts}
\usepackage{graphicx}
\usepackage[percent]{overpic}
\usepackage{xcolor}
\newif\ifhyper\IfFileExists{hyperref.sty}{\hypertrue}{\hyperfalse}
\ifhyper\usepackage{hyperref}\fi
\usepackage{enumitem}
\usepackage{subfig}
\usepackage{caption}
\usepackage{keyval}
\usepackage[margin=1in]{geometry}
\usepackage{setspace}

\usepackage[displaymath, mathlines,pagewise]{lineno}
\definecolor{myblue}{RGB}{10,9,224}
\definecolor{mygreen}{RGB}{38,175,79}

\theoremstyle{definition}

\newtheorem{theorem}{Theorem}

\newtheorem{lemma}[theorem]{Lemma}

\newtheorem{remark}[theorem]{Remark}

\def \proof {{ \medbreak \noindent {\bf Proof.} }}
\def\proofof#1{{ \medbreak \noindent {\bf Proof of #1.} }}
\def\proofcont#1{{ \medbreak \noindent {\bf Proof of #1, continued.} }}
\def\supp{{\rm supp}}
\def\max{{\rm max}}
\def\min{{\rm min}}
\def\dist{{\rm dist}}
\def\Aut{{\rm Aut}}
\def\id{{\rm id}}
\def\Stab{{\rm Stab}}
\def\m{{m}}

\def\calH{{\mathcal H}}

\begin{document}
\maketitle

\bigskip

\def\eref#1{(\ref{#1})}
\newcommand{\Prob} {{\bf P}}
\newcommand{\C}{\mathcal{C}}
\newcommand{\LL}{\mathcal{L}}
\newcommand{\Z}{\mathbb{Z}}
\newcommand{\N}{\mathbb{N}}
\newcommand{\HH}{\mathbb{H}}
\newcommand{\Rr}{\mathbb{R}^3}
\newcommand{\h}{\mathcal{H}}
\newcommand{\calS}{\mathbb{S}}
\def\diam{\mathrm{diam}}
\def\length{\mathrm{length}}
\def\ev#1{\mathcal{#1}}
\def\Isom{{\rm Isom}}
\def\Re{{\rm Re}}
\def \eps {\epsilon}
\def \P {{\Bbb P}}
\def \E {{\Bbb E}}
\def \proof {{ \medbreak \noindent {\bf Proof.} }}
\def\proofof#1{{ \medbreak \noindent {\bf Proof of #1.} }}
\def\proofcont#1{{ \medbreak \noindent {\bf Proof of #1, continued.} }}
\def\supp{{\rm supp}}
\def\max{{\rm max}}
\def\min{{\rm min}}
\def\dist{{\rm dist}}
\def\Aut{{\rm Aut}}
\def\id{{\rm id}}
\def\Stab{{\rm Stab}}

\newcommand{\lra}{\leftrightarrow}
\newcommand{\xlra}{\xleftrightarrow}
\newcommand{\xnlra}{\xnleftrightarrow}
\newcommand{\pc}{{p_c}}
\newcommand{\pt}{{p_T}}
\newcommand{\ptk}{{\hat{p}_T}}
\newcommand{\pl}{{\tilde{p}_c}}
\newcommand{\pe}{{\hat{p}_c}}
\newcommand{\pr}{\mathrm{\mathbb{P}}}
\newcommand{\pp}{\mu}
\newcommand{\ex}{\mathrm{\mathbb{E}}}
\newcommand{\ee}{\mathrm{\overline{\mathbb{E}}}}

\newcommand{\om}{{\omega}}
\newcommand{\ebd}{\partial_E}
\newcommand{\ivbd}{\partial_V^\mathrm{in}}
\newcommand{\ovbd}{\partial_V^\mathrm{out}}
\newcommand{\q}{q}
\newcommand{\TT}{\mathfrak{T}}
\newcommand{\T}{\mathcal{T}}
\newcommand{\RR}{\mathcal{R}}

\newcommand{\CC}{\Pi}
\newcommand{\BB}{\Pi}

\newcommand{\ELL}{\mathcal{L}}
\newcommand{\A}{\mathcal{A}}
\newcommand{\cc}{\mathbf{c}}
\newcommand{\pa}{{PA}}
\newcommand{\degi}{\deg^{in}}
\newcommand{\dego}{\deg^{out}}
\def\Pn{{\bf P}_n}

\newcommand{\R}{\mathbb R}

\newcommand{\F}{F}
\newcommand{\FF}{\mathfrak{F}}
\newcommand{\Can}{\rm Can}
\newcommand{\Vol}{\mathrm Vol}
\def\br{{\rm br}}
\def\gr{{\rm gr}}
\def\Gstar{{{\cal G}_{*}}}
\def\Gstarstar{{{\cal G}_{**}}}
\def\Gstarplus{{{\cal G}_{*}^\frown}}
\def\Rel{{\cal R}}
\def\Comp{{\rm Comp}}
\def\calG{{\cal G}}
\def\calB{{\cal B}}
\def\omps{{\omega_\delta^\eps}}
\def\HP{{H\! P}}
\def\ohp{{\HP_\ell(\omega)}}
\def\ohpdual{{\HP_\ell(\omega^\star)}}

\def\ohpinf{{\HP_\ell(\omega)^\infty}}
\def\ohpdualinf{\HP_\ell(\omega^\star)^\infty}


\begin{abstract}
Consider a unimodular random planar map (URM) with an invariant ergodic percolation having infinite primal and dual clusters. We say that there is half-plane coexistence if both the percolation and its dual have infinite clusters when restricted to a half-plane. Under mild assumptions on the percolation, we show that the URM is parabolic if and only if there is no half-plane coexistence, and it is hyperbolic if and only if there is half-plane coexistence. This extends the recent half-plane non-coexistence result for $\mathbb{Z}^2$ by Klausen and Kravitz and provides another manifestation of the parabolic-hyperbolic dichotomy for URM's.
\end{abstract}

\bigskip

A unimodular random map (URM) is a unimodular random graph (URG) that is almost surely planar and locally finite, equipped with a proper embedding into either 
the Euclidean plane $\R^2$ or the hyperbolic plane $\HH^2$ with bounded faces and no accumulation points, whose distribution is invariant under the isometries of the ambient plane. Examples of URM's are 1-ended transitive and quasi-transitive planar graphs, Voronoi tessallations of invariant point processes, or Benjamini-Schramm limits of suitable finite planar graphs, such as the Uniform Infinite Planar Triangulation (UIPT) and Quadrangulation (UIPQ), and their hyperbolic variants; see \cite{C}. Whether the Euclidean or the hyperbolic plane is under examination, that is, whether the map is {\it parabolic} or {\it hyperbolic}, can be identified through a variety of other properties. Such characterization was started by He and Schramm in the context of circle packings of infinite triangulations of the plane \cite{HS,HS2}, continued by Benjamini and Schramm \cite{BS2, BS3}, and systematically extended by Angel, Hutchcroft, Nachmias and Ray \cite{AHNR, AHNR2}. One example of the sharp dichotomy is according to invariant amenability, as defined in \cite{AL}: invariantly amenable URM's are necessarily maps embedded in the Euclidean plane, whereas invariantly nonamenable URM's are maps embedded in the hyperbolic plane \cite{AHNR2}. The present paper contributes to the description of this dichotomy, by adding one more property that separates parabolic and hyperbolic URM's.

Given a random subgraph $\omega$, say that {\it there is coexistence} if the restriction of $\omega$ and its dual to the upper half-plane both contain some infinite clusters.
We will consider two conditions:

(A) $\omega$ has finitely many infinite clusters almost surely;

(B) the percolation is insertion and deletion tolerant.

\begin{theorem}\label{main}
\noindent
Suppose $G$ is a URM with dual. Let $\omega$ be a random subgraph of $G$ given by an ergodic invariant edge percolation that satisfies (A) or (B). Suppose further that both $\omega$ and $\omega^\star$ have some infinite cluster. Then

(1) $G$ is parabolic if and only if there is no half-plane coexistence almost surely;

(2) $G$ is hyperbolic if and only if there is half-plane coexistence almost surely.
\end{theorem}

All definitions will be given in the next section. 

Among the other characterizing properties for the parabolic-hyperbolic dichotomy are whether the expected curvature is zero, the critical percolation parameters $p_c$ and $p_u$ are equal a.s., or whether every subtree is recurrent a.s.; see the Dichotomy Theorem in \cite{AHNR2} for a complete list and for the history of results. 
Several properties in the Dichotomy Theorem of \cite{AHNR2} apply to URG's without any reference to planar embeddings, invariant amenability being one of them. The representability of such planar URG's as URM's in $\R^2$ or $\HH^2$
was addressed in \cite{T,BT}. 

\begin{remark}\label{megj}
The proof of theorem \ref{main}  works almost automatically if we replace the full group of isometries by any other transitive group of isometries. It is only the definition of $\phi$ in the proof of Theorem \ref{main} that has to be suitably modified.
\end{remark}

Klausen and Kravitz \cite{KK} proposed to study half-plane non-coexistence on the lattice $\Z^2$ for general percolation processes, where FKG-type correlation inequalities, which were used in earlier non-coexistence theorem, are not available. Their result inspired the present work, and follows from our Theorem \ref{main} by Remark \ref{megj}. 
\begin{theorem}\label{old}[Klausen-Kravitz, \cite{KK}]   
Consider a random subgraph $\omega$ of $\Z^2$ given by a translation invariant ergodic edge percolation. Suppose that at least one of the following holds:

\noindent
(A) $\omega$ has finitely many infinite clusters almost surely;

\noindent
(B) the percolation is insertion and deletion tolerant.

\noindent
Then the restriction of $\omega$ or its dual to the upper half-plane has no infinite cluster. 
\end{theorem}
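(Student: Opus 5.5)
The plan is to derive Theorem \ref{old} from part (1) of Theorem \ref{main}, used in the modified form of Remark \ref{megj}. The transitive group of isometries will be the translation group of $\R^2$ instead of the full isometry group.

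\textbf{Trivial case.} If $\omega$ or $\omega^\star$ has no infinite cluster at all, then its restriction to the upper half-plane has none either, and the conclusion holds. By ergodicity, the existence of an infinite primal (resp.\ dual) cluster is a $0$--$1$ event. So we may assume that both $\omega$ and $\omega^\star$ have infinite clusters almost surely, which is exactly the extra hypothesis of Theorem \ref{main}.

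\textbf{Building the URM.} Let $U$ be uniform on $[0,1)^2$, independent of $\omega$. Consider the map $G=\Z^2+U$, drawn with straight unit edges, together with the shifted percolation $\omega+U$.
\begin{itemize}
\item \emph{Invariance.} Let $t\in\R^2$ and write $t+U=k+U'$ with $k\in\Z^2$ and $U'\in[0,1)^2$. Then $U'$ is again uniform and independent of $\omega$. Moreover $\omega+t+U$ has the law of $(\omega+k)+U'$, which equals the law of $\omega+U'$ by $\Z^2$-invariance. Hence the pair $(G,\omega+U)$ is invariant under all translations of $\R^2$.
\item \emph{Ergodicity.} Take a translation-invariant event for $(G,\omega+U)$. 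Conditionally on $U$, it is a $\Z^2$-invariant event for $\omega$. By ergodicity of $\omega$ it has probability $0$ or $1$, and by invariance under real shifts this value does not depend on $U$. So the law is ergodic.
\item \emph{Other properties.} The map is locally finite and has bounded faces (unit squares). Its dual is again a shifted copy of $\Z^2$. It is unimodular, since it is a randomly placed transitive graph. Conditions (A) and (B) are unaffected by the shift.
\item \emph{Parabolicity.} $G$ is embedded in the Euclidean plane, so it is parabolic. Equivalently, $\Z^2$ is amenable, so $G$ is invariantly amenable and lies on the parabolic side of the dichotomy of \cite{AHNR2}.
\end{itemize}
Therefore Theorem \ref{main}(1), in its translation-group version, gives that there is almost surely no half-plane coexistence.

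\textbf{Transferring back to $\Z^2$.} Half-plane coexistence for $\omega+U$ concerns a half-plane $\{y\ge c\}$. Because only translations are used, the direction of the half-plane is fixed, so it is horizontal. The event that $\omega$ (resp.\ $\omega^\star$) has an infinite cluster in $\{y\ge c\}$ is monotone decreasing in $c$. Consequently the event that this holds for some $c$ is translation invariant, hence has probability $0$ or $1$. Non-coexistence for the shifted configuration therefore transfers to the fixed upper half-plane of $\Z^2$, up to an integer shift, which is harmless by invariance. This gives the conclusion of Theorem \ref{old}.

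\textbf{Main obstacle.} The one step that needs care is checking that the proof of Theorem \ref{main} really runs with only the translation group. Remark \ref{megj} asserts this and says only the auxiliary map $\phi$ must be adapted. Taking that remark as given, the rest of the argument is the routine verification above: invariance, ergodicity, and the $0$--$1$ nature of the half-plane events.
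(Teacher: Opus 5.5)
Your reduction is correct, and it is the derivation the paper alludes to in its introduction (``follows from our Theorem~\ref{main} by Remark~\ref{megj}''), but it is not the proof the paper gives for this statement.

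The paper argues directly on the lattice. It fixes a horizontal line $\ell$ of $\Z^2$ itself, so neither the auxiliary random isometry $\phi$ nor the stripes $S(\ell)$ are needed. It then treats three cases:
\begin{enumerate}
\item $\ell\cap\ohpinf$ is finite, which is ruled out by the MTP.
\item Some infinite cluster meets $\ell$ infinitely often. Then each infinite cluster of $\ohpdual$ is a genuine $\omega^\star$-cluster with a lowermost line. This is excluded by a local modification under (B), and under (A) by counting lowermost lines together with Lemma~\ref{2ended}.
\item Every infinite cluster meets $\ell$ finitely. Under (B) one closes finitely many edges. Under (A) one gets a single $\omega$-cluster with infinitely many ends, excluded by Burton--Keane on the amenable $\Z^2$; the general proof instead uses a separating dual cluster with finitely many lowermost vertices.
\end{enumerate}

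Your verifications are sound: translation invariance and ergodicity of $(\Z^2+U,\omega+U)$, preservation of (A) and (B), the trivial case, and the transfer back via monotonicity and the $0$--$1$ law. In that last step you should say explicitly that the probability of an infinite cluster in $\{y\ge c\}$ does not depend on $c$, so the $0$--$1$ law for the union over $c$ passes to each fixed $c$.

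Your route makes clear that the lattice result is a special case of the URM result. The price is that all the combinatorial content is delegated to Remark~\ref{megj}, which the paper asserts without proof. What you call the ``main obstacle'' is in fact the whole argument. Discharging it amounts to rerunning the case analysis above with a redundant second random shift, which the paper's self-contained direct proof avoids.
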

We mention that in \cite{KK} the slightly stronger condition of uniform finite energy replaced (B) above.

Some condition on the percolation in Theorem \ref{main} is necessary, as shown by the example of all horizontal lines in $\Z^2$. (This is only translation invariant, but simple isometry-invariant examples also exist.) However, the claim about the hyperbolic plane works in full generality.

\begin{theorem}\label{hyper}
Suppose $G$ is a URM with dual in $\HH^2$. Let $\omega$ be a random subgraph of $G$ given by an ergodic invariant edge percolation and suppose that both $\omega$ and $\omega^\star$ have some infinite cluster. Then
there is half-plane coexistence almost surely.
\end{theorem}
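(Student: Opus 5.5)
We aim to show that in $\HH^2$, with $\omega$ and $\omega^\star$ both having infinite clusters, the restriction of each to a half-plane still has infinite clusters almost surely. The approach is a mass-transport argument that combines the nonamenability of hyperbolic URM's with the geometry of $\HH^2$.

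The first step is to establish that, in $\HH^2$, every infinite cluster of $\omega$ and of $\omega^\star$ accumulates at infinitely many points of the ideal boundary $\partial\HH^2$. More precisely, the limit set on $\partial\HH^2$ has at least two points, and hence, by invariance and ergodicity, it contains a nondegenerate arc. To prove this, we use that the map is invariantly nonamenable (\cite{AHNR2}). This gives $p_c<p_u$-type behaviour and, in particular, the standard fact that infinite clusters of invariant percolations in invariantly nonamenable URG's are transient with infinitely many ends or have positive speed. Concretely, we intend to use the mass-transport principle to show that an infinite cluster cannot have a single boundary accumulation point $\xi$. If it did, then each vertex could be assigned the point $\xi$ in an invariant way, which would produce an invariant random point of $\partial\HH^2$ (or a measurable equivariant choice of ends). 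A mass transport along horocycles centered at $\xi$ rules this out: sending mass from each vertex to the vertices in a bounded region "closer to $\xi$" yields unbalanced expected in- and out-mass, because horoballs grow exponentially. The same argument applies to $\omega^\star$.

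The second step is planarity. Because primal and dual clusters cannot cross, an infinite primal cluster and an infinite dual cluster have boundary limit sets that "interleave" suitably. We then pick a half-plane $H$ whose ideal boundary arc $I\subset\partial\HH^2$ contains limit points of both some primal cluster $C$ and some dual cluster $D$. Taking a geodesic ray in $C$ converging to a point $\xi\in I$ in the interior of $I$, only finitely many vertices of the ray lie outside $H$, so a tail of the ray lies in $H$, and that tail sits in an infinite cluster of $\omega|_H$. The same works for $\omega^\star$. It remains to show that such an $H$ exists for the fixed upper half-plane. By invariance under isometries and ergodicity, the event that the fixed half-plane exhibits coexistence has probability $0$ or $1$. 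Since the limit sets are nonempty closed sets with at least two points, and rotating $H$ covers arcs of any direction, the event has positive probability and therefore probability one. Here we must handle the fact that a limit set might be a Cantor set, but we only need it to meet the open arc, which holds with positive probability under a random rotation.

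The main obstacle is the first step: ruling out clusters whose limit set on $\partial\HH^2$ is a single point, without any tolerance or finiteness assumption on $\omega$. We also need to ensure that "converging to $\xi$" is compatible with staying in the half-plane, which requires clusters to go to infinity in a geodesic-like way rather than spiraling. We would first try the invariant-boundary-point mass transport described above; if that fails, we would fall back on a Busemann-function transport showing that the expected flow toward $\xi$ is strictly positive, which contradicts unimodularity.
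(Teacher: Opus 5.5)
There is a genuine gap, starting with your first step. Your horoball/Busemann transport only gives a contradiction if the assignment $x\mapsto\xi$ takes finitely many values, for instance if there is a unique infinite cluster. In that case the law of a uniformly chosen selected point would be an isometry-invariant probability measure on $\partial\HH^2$, which cannot exist.

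In the setting of the theorem, however, one of $\omega,\omega^\star$ has infinitely many infinite clusters. An invariant assignment of one boundary point per cluster is then perfectly compatible with unimodularity. The wired uniform spanning forest of a hyperbolic planar lattice shows this:
\begin{itemize}
\item it is an ergodic invariant percolation satisfying the hypotheses (it is acyclic, so every cluster of $\omega^\star$ is infinite);
\item its trees are one-ended, and the path from each vertex to the end is a loop-erased random walk, hence converges to a point of $\partial\HH^2$;
\item the natural transport toward that point (each vertex sends unit mass to its parent) is balanced, because the expected number of children is $1$.
\end{itemize}
Different trees flow to different boundary points, so no horoball imbalance arises, and your Busemann fallback fails for the same reason. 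Separately, the claim that limit sets contain a nondegenerate arc does not follow from invariance and ergodicity; you yourself later allow Cantor-like limit sets. The quoted ``transient with infinitely many ends, or positive speed'' dichotomy is also not available for general invariant percolations.

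Even granting Step 1, Step 2 does not produce an infinite cluster of the restriction to $H$. A limit point $\xi$ of $V(C)$ in the open arc $I$ need not be the limit of any ray of $C$. Nothing in your argument prevents two things: rays of $C$ failing to converge in $\overline{\HH^2}$ and leaving $H$ infinitely often, or $C$ approaching $\xi$ only through infinitely many finite branches, each of which exits $H$. A ray of $C$ eventually contained in $H$ is exactly what must be proved. You need one for a primal cluster and one for a dual cluster in the same fixed half-plane, and the ``interleaving'' remark does not supply this. In addition, the event that a fixed half-plane exhibits coexistence is invariant only under the stabilizer of $H$. Its $0$--$1$ law therefore needs more than ergodicity under all isometries, for example Moore's ergodicity theorem.

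The paper avoids boundary limit sets altogether:
\begin{itemize}
\item Since one of $\omega,\omega^\star$ has infinitely many infinite clusters, the other has a cluster with infinitely many ends.
\item Lemma~\ref{forest} extracts an invariant subforest with infinitely-ended trees. Among any three ends of such a tree, two are joined by a bi-infinite path with infinite dual clusters on both sides.
\item Lemma~\ref{ends} shows, via a Poisson line process and Lemma~\ref{teljes}, that the restriction of this forest to a fixed half-plane still has at least three ends almost surely. Otherwise one obtains an exhaustion of the forest by invariant subforests whose components have at most two ends, contradicting that the backbone has expected degree larger than $2$.
\end{itemize}
A bi-infinite path of the forest inside $H$ then gives an infinite restricted cluster, and the component of its complement lying in $H$ contains an infinite cluster of the dual configuration.
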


\section{Definitions and preliminaries}

Example 9.6 in \cite{AL} explains why duals of such graphs are always URG's.
We say that $G$ is a URM {\it with dual} when $G$ and its dual $G^\star$ are embedded in the plane $\R^2$ or $\HH^2$ in a jointly isometry-invariant way, with no accumulation points, every vertex of $G^\star$ is in a face of $G$ and vica versa, and an edge of $G^\star$ and an edge of $G$ intersect only if they are dual. With a slight abuse of terminology, we will refer to a vertex or edge both as an object in the graph and as a subset of the plane.
When we refer to such a $G$ we will always implicitly assume that $G^\star$ is also given. Given some random subgraph $\omega$ of $G$, its dual $\omega^\star\subset G^\star$ is automatically defined as a subgraph of $V(G^\star)$ where we delete those edges whose dual is in $\omega$. An {\it invariant edge percolation} on $G$ is a random spanning subgraph of $G$ such that its joint distribution with $G$ is invariant under the isometries of the plane. Similarly we can define a random subgraph of a percolation configuration to be invariant if it is jointly invariant with the percolation and $G$.
If $G=(V,E)$ is a random graph of distribution $\mu_G$ and $\omega \in \{0,1\}^E$ an edge-percolation configuration, where $\omega(e)=1$ indicates that $e$ is open, a probability measure $\mathbb{P}$ on $\{0,1\}^E$ is called \emph{insertion tolerant} $\mu_G$-almost surely for every edge $e \in E(G)$ and every event $A$ with $\mathbb{P}(A)>0$,
we have
$\mathbb{P}(A \cap \{\omega(e)=1\}) > 0$.
Similarly, $\mathbb{P}$ is called \emph{deletion tolerant} if under the above conditions we have $\mathbb{P}(A)>0$, $\mathbb{P}(A \cap \{\omega(e)=0\}) > 0$.

Given a half-plane $\HP$ in $\R^2$ or $\HH^2$, and an embedded graph $\gamma$, the {\it restriction of $\gamma$ to $\HP$} is the subgraph of $\gamma$ given by vertices and edges that are fully contained in $\HP$. Denote this restriction by $\HP(\gamma)$.

For a subgraph $\omega$ of the square lattice $\Z^2$, and a horizontal line $\ell$ in $\Z^2$ or in $\Z^{2\star}$, let $\HP_\ell$ be the closed upper half plane detemined by $\ell$. In accordance with the definition in the previous paragraph, $\HP_\ell(\Z^2)$ is the subgraph of $\Z^2$ restricted to $\HP_\ell$, and $\HP_\ell(\Z^{2\star})$ is the subgraph of $\Z^{2\star}$ restricted to $\HP_\ell$.
For the unimodular random graph underlying a URM $G$, we will need a consequence of the Mass Transport Principle \cite{BLPS, AL}, which we will refer to as MTP. Namely, that there is no way to define a random invariant function (``assignment'') from a subset of $V(G)$ to $V(G)$ in such a way that with positive probability there is a vertex whose preimage is infinite. 

The two claims of the following lemma are intuitively clear.
\begin{lemma}\label{2ended}
Let $G$ be an infinite planar graph with finite degrees, bounded faces and no accumulation points. Let $C_1,C_2\subseteq G$ be vertex-disjoint subgraphs. Then the dual of $C_1\cup C_2$ contains a bi-infinite path that separates $C_1$ from $C_2$ in the plane.

If a subgraph of $G$ has infinitely many 2-ended infinite clusters, then the dual of $\xi$ has infinitely many infinite clusters.
\end{lemma}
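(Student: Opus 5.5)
Throughout, I read ``the dual of $C_1\cup C_2$'' as the subgraph of $G^\star$ obtained by deleting the duals of the edges of $C_1\cup C_2$, as in the definition of $\omega^\star$. I take $C_1,C_2$ to be infinite and connected, which is the case needed later. If they were allowed to be finite, a finite dual cycle would do the separating instead.

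\textbf{First claim.} Let $H=C_1\cup C_2$, viewed as a closed subset of the plane; it is closed because there are no accumulation points. Let $\Phi$ be a component of the complement of $H$ whose boundary meets both $C_1$ and $C_2$. Such a $\Phi$ exists: take a path in $G$ from $C_1$ to $C_2$ and look at its first edge leaving $C_1$ whose interior avoids $H$.
\begin{itemize}
\item Inside $\Phi$, consider the faces of $G$ that share a vertex or an edge with $C_1$. Because faces are bounded and the embedding is locally finite, these faces, in their order along the boundary walk of $\Phi$ on the $C_1$ side, form a chain.
\item Consecutive faces in this chain are joined by dual edges that cross edges of $G$ not in $H$. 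Dual edges crossing $C_1$ are never used, since we stay on one side of $C_1$. Dual edges crossing $C_2$ can also be avoided, because the two subgraphs are vertex-disjoint.
\item Since $C_1$ is infinite, connected and locally finite, the boundary walk of $\Phi$ along $C_1$ is infinite in both directions, or it closes up. If it closes up, $C_1$ is cut off by a finite dual cycle and we reduce to a simpler case. So the chain of faces yields a bi-infinite dual path in the dual of $H$, lying in $\Phi$ next to $C_1$.
\item This path separates $C_1$ from $C_2$ by a Jordan-curve argument. Compactify the plane at one point; the bi-infinite path then becomes a Jordan curve through $\infty$. $C_1$ and the part of $\partial\Phi$ belonging to $C_2$ lie on opposite sides of this curve. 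Since $C_2$ is connected and disjoint from the path, all of $C_2$ lies on that same opposite side.
\end{itemize}

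\textbf{Second claim.} Let $K_1,\dots,K_n$ be distinct 2-ended infinite clusters of $\xi$, and let $\gamma_i\subset K_i$ be a bi-infinite path, which exists because $K_i$ is 2-ended.
\begin{itemize}
\item In the one-point compactification the $\gamma_i$ are Jordan curves meeting only at $\infty$. Their complement therefore has $n+1$ components.
\item At least $n-1$ of these components have boundary containing two different paths $\gamma_i$ and $\gamma_j$. Indeed, the arcs are arranged cyclically around $\infty$, and consecutive arcs bound a common component.
\item In such a component $D$, apply the first claim to the pieces of $K_i$ and $K_j$ facing $D$. This gives a bi-infinite path in the dual of $\xi$ lying in $D$, hence an infinite dual cluster there. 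The dual edges used cross only edges outside $\xi$, so the path is a genuine path of $\xi^\star$.
\item Dual clusters in different components are distinct. Any dual path between two components would have to cross some $\gamma_i$, i.e.\ cross an open edge of $\xi$, which is impossible.
\end{itemize}
So $\xi^\star$ has at least $n-1$ infinite clusters, and letting $n\to\infty$ proves the claim.

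The main obstacle is making the first claim topologically rigorous. The delicate points are that the chain of faces along $\partial\Phi$ is genuinely bi-infinite, and that the planar separation statement holds for non-compact curves. I would handle both by passing to the sphere and using local finiteness to turn the chain of faces into a proper curve.
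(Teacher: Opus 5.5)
\textbf{Main gap: the counting step in the second claim is false.} You assert that at least $n-1$ of the $n+1$ regions of the complement of $\bigcup_i\gamma_i$ have two different $\gamma_i$ on their boundary. Consecutive ends around $\infty$ do bound a common region, but these regions may all be the same one. The regions form a tree with the $\gamma_i$ as edges, and this tree can be a star. Concretely, in $\mathbb{Z}^2$ let $\xi$ consist of the U-shaped paths $K_k$ ($k\in\mathbb{Z}$), each formed by the rays $\{(3k\pm1,y):y\ge 0\}$ joined through $(3k,0)$, with all other edges closed. For any $n$ of them, the complement of their union consists of $n$ pockets $\{3k-1<x<3k+1,\ y>0\}$ and one outer region. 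Only the outer region touches two different paths. So your argument produces a single infinite dual cluster for every $n$, even though each pocket contains an infinite cluster of $\xi^\star$ (the dual vertices $(3k\pm\frac12,y+\frac12)$, $y\ge 0$). Regions bounded by a single $\gamma_i$ are never treated, and they may be almost all of the regions.

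\textbf{What is missing.} You need an infinite $\xi^\star$-cluster in a region bounded by a single $\gamma_i$. This requires using 2-endedness beyond the mere existence of $\gamma_i$. One way to do it:
\begin{enumerate}
\item Choose $\gamma_i$ so that its two tails go to the two distinct ends of $K_i$.
\item Suppose every complementary component of $K_i$ on a given side of $\gamma_i$ were bounded. Then for every finite $S\subset K_i$ you could join the two tails of $\gamma_i$ inside $K_i\setminus S$: go far out on that side and detour along the boundaries of the holes you meet. This would merge the two ends, a contradiction.
\item Hence that side contains an unbounded complementary component $\Phi$ of $K_i$.
\item The chain of faces along $\partial\Phi$ is then an infinite walk in $\xi^\star$. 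The reason is that an edge incident to a vertex of the cluster $K_i$ but not in $K_i$ is necessarily closed.
\end{enumerate}
The same observation is what justifies your sentence that ``the dual edges used cross only edges outside $\xi$''. The first claim by itself only gives a path in the dual of $C_1\cup C_2$, and other clusters of $\xi$ inside $D$ could block it. With this fix, each of the $n+1$ regions carries its own infinite dual cluster. That is exactly the content of the paper's count of unbounded complementary components, which grows by at least one with each added 2-ended cluster.

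\textbf{First claim.} Your route is essentially the paper's (a separating curve read off as a dual walk), and restricting to infinite connected $C_1,C_2$ is the right reading, since the statement fails otherwise. A few repairs are needed:
\begin{itemize}
\item Existence of $\Phi$ should use a whole subpath from $C_1$ to $C_2$ whose interior avoids $H$, not a single edge.
\item The chain of faces is only a walk, so it must be loop-erased, as the paper does.
\item The ``closes up'' case cannot occur. Since $C_1\cup C_2\cup\{\infty\}$ is connected in the sphere, $\partial\Phi$ is connected. As it meets the disjoint closed sets $C_1$ and $C_2$, it must contain $\infty$.
\end{itemize}
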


\begin{proof}
By the Jordan curve theorem there is a simple curve $\gamma$ in the plane such that $C_1$ is on one side of the curve and $C_2$ is on the other and $\gamma$ has no accumulation point. Reading the edges in the order they are crossed by $\gamma$ defines a biinfinite walk in $G^\star$. Delete cycles starting from some vertex in both directions separately, to obtain a (simple) biinfinite path, which still separates $C_1$ and $C_2$. 

For the second assertion, let $K_1, K_2,\ldots$ be the components of the given subgraph.
Proving by induction, let $U$ be the unbounded component of the plane that contains $K_{n+1}$ after removing $K_1,\ldots,K_n$. Then $K_{n+1}$ splits $U$ into at least two unbounded components. Thus, removing each additional 2-ended cluster increases the number of unbounded complementary components by at least one.
\end{proof}\qed


\section{Proofs}

Let us start with the proof of Theorem \ref{old}. This is a special, simpler case of the proof of the parabolic part of Theorem
\ref{main} and is not needed for the rest of the paper. We decided to include it as a separate proof because of its simplicity, and because it shows the main ideas without some of the technicalities.

Let $\Z^2$ denote the square lattice as a plane graph with vertex set $\{(a,b):a,b\in\Z\}$ and straight line segments as edges. Let its dual $\Z^{2\star}$ be its translate by $(\frac{1}{2},\frac{1}{2})$, with the natural identification between each face of $\Z^2$ and the vertex of the dual that it contains. Given an edge $e$, let $e^\star$ denote its dual edge. 
A {\it horizontal line} in $\Z^2$ is a subgraph induced by $\{(a,b):a\in\Z\}$ for some $b\in\Z$. A horizontal line in $\Z^{2\star}$ is a subgraph induced by $\{(a+\frac{1}{2},b+\frac{1}{2}):a\in\Z\}$ for some $b\in\Z$.



\begin{proofof}{Theorem \ref{old}}
Denote by $\ohpinf$ (respectively $\ohpdualinf$) the union of the infinite clusters of $\ohp$ (respectively $\ohpdual$). Suppose by contradiction that none of $\ohpinf$ and $\ohpdualinf$ is empty.
Fix a horizontal line $\ell$. 

\smallskip
{\bf Case 1: $\ell\cap \ohpinf$ is finite.} This has zero probability, otherwise one can assign to each vertex of $\ell$ a randomly chosen element of $\ell\cap \ohpinf$ and do so for every such $\ell$, to get a MTP contradiction.

\smallskip
{\bf Case 2: $\ell\cap \ohpinf$ is infinite, and there is an infinite cluster $C$ of $\ohpinf$ with $C\cap\ell$ infinite.} 
By the MTP there cannot be a first or last element of $C\cap\ell$ in $\ell$ (with respect to the linear ordering of $\ell$ coming from $\Z$).
For any two points of $C\cap\ell$ there is a path in $C$ between them. The union of such paths over all pairs separates 
$\ohpdualinf$ from $\Z^{2\star}\setminus \HP_\ell(\Z^{2\star})$ in $\Z^{2\star}$.
In particular, every infinite cluster $\hat C$ of $\ohpdual$ is also a cluster of $\omega^\star$. Thus $\hat C$ {\it has a lowermost line}, meaning that there is a lowermost horizontal line $\m $ in $\Z^{2\star}$ that $\hat C$ intersects. This intersection cannot be finite by the MTP. However, if it is infinite and (B) holds, by changing the status of a suitable edge that connects this lowermost line to the line below it, 
we find that with positive probability there exists a horizontal line in $\Z^{2\star}$ and cluster such that the cluster intersects the horizontal line in infinitely many vertices, but only one edge connects it to a lower line.
Assign the endpoint of this special edge in the intersection of the mentioned cluster and horizontal line (and for each such pair of horizontal line and cluster), to get a MTP contradiction. So when (B) holds, we can rule out Case 2. Assume now (A), and that $\hat C\cap\m $ is infinite.
Then $\hat C$ is the only infinite cluster of $\omega^\star$ that has $\m $ as its lowermost line and intersects it.
If the number of infinite clusters in $\omega^\star$ that have a lowermost line is finite (and at least one), then we identified finitely many horizontal lines of $\Z^2$ in an invariant way, giving a MTP contradiction. (Let every vertex $v$ be mapped to one of the vertices from these lines and on the vertical line containing $v$, chosen at random.) If their number is infinite, then we can apply Lemma \ref{2ended} to conclude that $\omega$ has infinitely many clusters, a contradiction.

\smallskip
{\bf Case 3: $\ell\cap \ohpinf$ is infinite, and for every infinite cluster $C$ of $\ohpinf$, $C\cap\ell$ is finite.} In particular, $\ohpinf$ has infinitely many infinite clusters. This is not possible under the assumption (B), because we could close finitely many edges and create an infinite cluster with finitely many lowermost points, a contradiction to the MTP. So let us assume (A). As $\omega$ has only finitely many clusters, infinitely many infinite clusters of $\ohpinf$ belong to the same $\omega$-cluster $K$. But these can all be separated from $K$ by removing their finite intersection with $\ell$, showing that $K$ has infinitely many ends. This is not possible for an invariant percolation on an amenable graph by a Burton-Keane argument (see also Section 5 in \cite{BLPS}), giving a final contradiction.
\end{proofof}\qed
\medskip

The proof of Theorem \ref{main} in the parabolic setting will follow the same track.

\begin{proofof}{Theorem \ref{main}}
It is enough to prove that for $G$ and $\omega$ as in the theorem, if $G$ is parabolic then there is no half-plane coexistence, and if $G$ is hyperbolic then there is always half-plane coexistence. Let us start with the first one.

{\bf Suppose that $G$ is a parabolic URM in the Euclidean plane.} Consider a copy of $\Z^2$ as a graph embedded in $\R^2$, with the horizontal and vertical axes and their positive directions identified. 
Let $\phi$ be the random isometry, independent from $G$ and $\omega$, which is the composition of a rotation around the origin by a uniform random angle in $[0,2\pi]$ and a translation by a uniform vector in $[0,1]^2$.
Consider the URM $\phi(\Z^2)$ and let $\calH$ be $\{\phi(\ell): \ell \text{ is a horizontal line in } \Z^2\}$. An element of $\calH$ is a line in $\R^2$, but it naturally corresponds to a biinfinite path in $\phi(\Z^2)$. The $\phi$-image of the positive vertical half-axis in $\Z^2$ determines an ``upward'' direction in $\phi(\Z^2)$, also applicable to the plane. We can talk about elements of $\calH$ being above or below each other, and about the {\it upper} half-plane $\HP_\ell$ of $\R^2$ determined by 
a line $\ell\in\calH$. Let $\ell^+\in\calH$ be the line that is ``above'' $\ell$ at distance 1. Let $S(\ell)$ be the {\it stripe} $\HP_\ell\setminus \HP_{\ell^+}$. Recall that for the percolation configuration $\omega\subset G$, 
$\HP_\ell(\omega)$ stands for the restriction of $\omega$ to the half-plane $\HP_\ell$, and similarly for
$\HP_\ell(\omega^\star)$. Finally, $\HP_\ell(\omega)^\infty$ and $\HP_\ell(\omega^\star)^\infty$
will stand for the union of infinite clusters in $\HP_\ell(\omega)$ and $\HP_\ell(\omega^\star)$ respectively. 

In what follows we will repeatedly use the fact that decorating a URM in $\R^2$ using a jointly isometry-invariant random process yields a unimodular random network (a decorated URG), to which the MTP can be applied.

Suppose by contradiction that for some $\ell\in\calH$ both $\HP_\ell(\omega)$ and $\HP_\ell(\omega^\star)$ have some infinite clusters with positive probability. 

\smallskip
{\bf Case 1: $S(\ell)\cap V(\HP_\ell(\omega)^\infty)$ is finite.} This has zero probability by the MTP applied to $\Z^2$. Namely, for each such $\ell$ and vertex $x$ of $\Z^2$ with $\phi (x)\in\ell$, assign to $x$ a uniformly chosen $y\in\Z^2$ such that $\phi (y)\in\ell$ and the distance of $\phi(y)$ from $S(\ell)\cap \HP_\ell(\omega)^\infty$ is at most 2. 

\smallskip
{\bf Case 2: $S(\ell)\cap V(\HP_\ell(\omega)^\infty)$ is infinite, and an infinite cluster $C$ of $\HP_\ell(\omega)^\infty$ has $V(C)\cap S(\ell)$ infinite.} 
Any half-stripe of $S(\ell)$ has to contain some point from $V(C)$, otherwise there would be a point $x$ in $V(C)\cap S(\ell)$ that is extremal to some line perpendicular to $\ell$ (meaning that $x$ is on the line and all other elements of $V(C)\cap S(\ell)$ are on one side of the line or on the line), and we could assign to each vertex of $V(C)\cap S(\ell)$ one of these points, chosen at random from the finitely many possibilities. This would contradict the MTP, applied to $G$.
So there are vertices from $V(C)\cap S(\ell)$ arbitrarily far in both directions in the stripe $S(\ell)$. There is a path in $C(\subset \HP_\ell)$ between any two of these points, and the union $U$ of these paths either separates an infinite cluster $\hat C$ of $\HP_\ell(\omega^\star)^\infty$ from $\ell$ ({\it Subcase 1}), or $\hat C$ is in between $U$ and $\ell$, that is, 
$\hat C$ separates $C$ from $\ell$ ({\it Subcase 2}). See Figure \ref{abra}. In either subcase, the cluster that is separated cannot intersect $\ell$. Consider Subcase 1, when $\hat C$ is separated from $\ell$. In particular, $\hat C$ is also a cluster of $\omega^\star$.  So there is an infinite cluster in $\omega^\star$ that intersects some stripe $S(\ell')$, 
$\ell'\in\calH$, but does not intersect any stripe $S(\ell'')$ with $\ell''$ below $\ell'$. Refer to this by saying that $S(\ell')$ is the lowermost stripe (for this cluster).
if (B) holds, by changing the $\omega$-status of a suitable finite set of edges in and incident to a path that connects this lowermost stripe to some stripe below it, we get finitely many lowest lying vertices in the cluster, giving a MTP contradiction. So when (B) holds, we can rule out Subcase 1. Assume now that (A) holds. Suppose first that with positive probability there are finitely many lowermost stripes $S_1,\ldots, S_k$ (for respective infinite clusters). Then to every vertex on a vertical line $m$ of $\Z^2$ assign a uniformly chosen vertex of $\Z^2$ in $m\cap\phi^{-1}(\cup S_i))$.
This contradicts the MPT for $\Z^2$. Finally, suppose that with positive probability there are infinitely many lowermost stripes. Then $\omega^\star$ necessarily has infinitely many infinite clusters. They are all 2-ended, hence from
Lemma \ref{2ended} we get a contradiction to (A). 
For Subcase 2
we get a contradiction similarly to Subcase 1, but the lemma is not needed to contradict (A) in the primal graph.

\begin{figure} 
\begin{minipage}{\columnwidth}
\centering
{%
\begin{overpic}[scale=0.1]{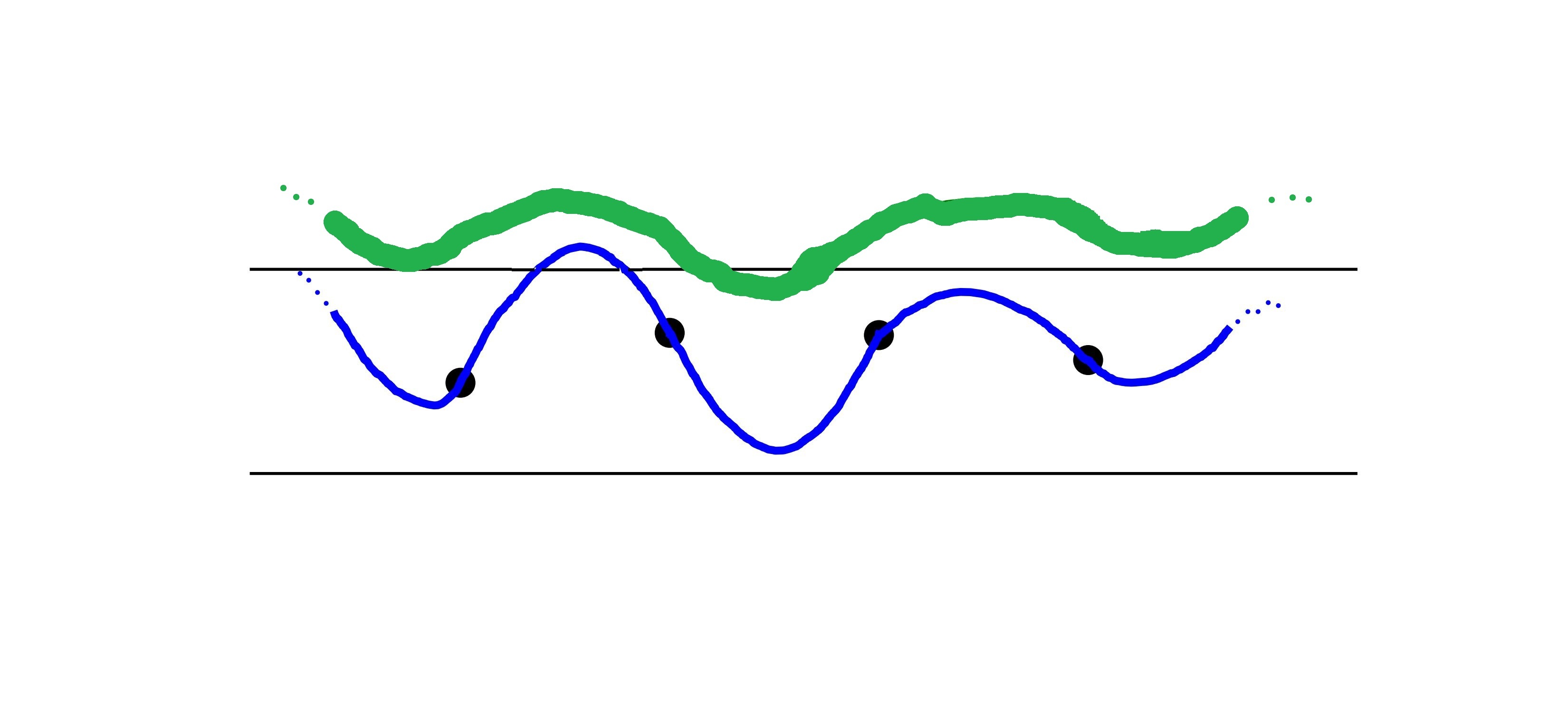}
\put(17,10){$\ell$}
\put(17,23){$\ell$}
\put(81,20){\color{myblue}$C$\color{black}}
\put(81,34){\color{mygreen}$\hat C$\color{black}}
\end{overpic}
\hspace{-15mm}
\begin{overpic}[scale=0.1]{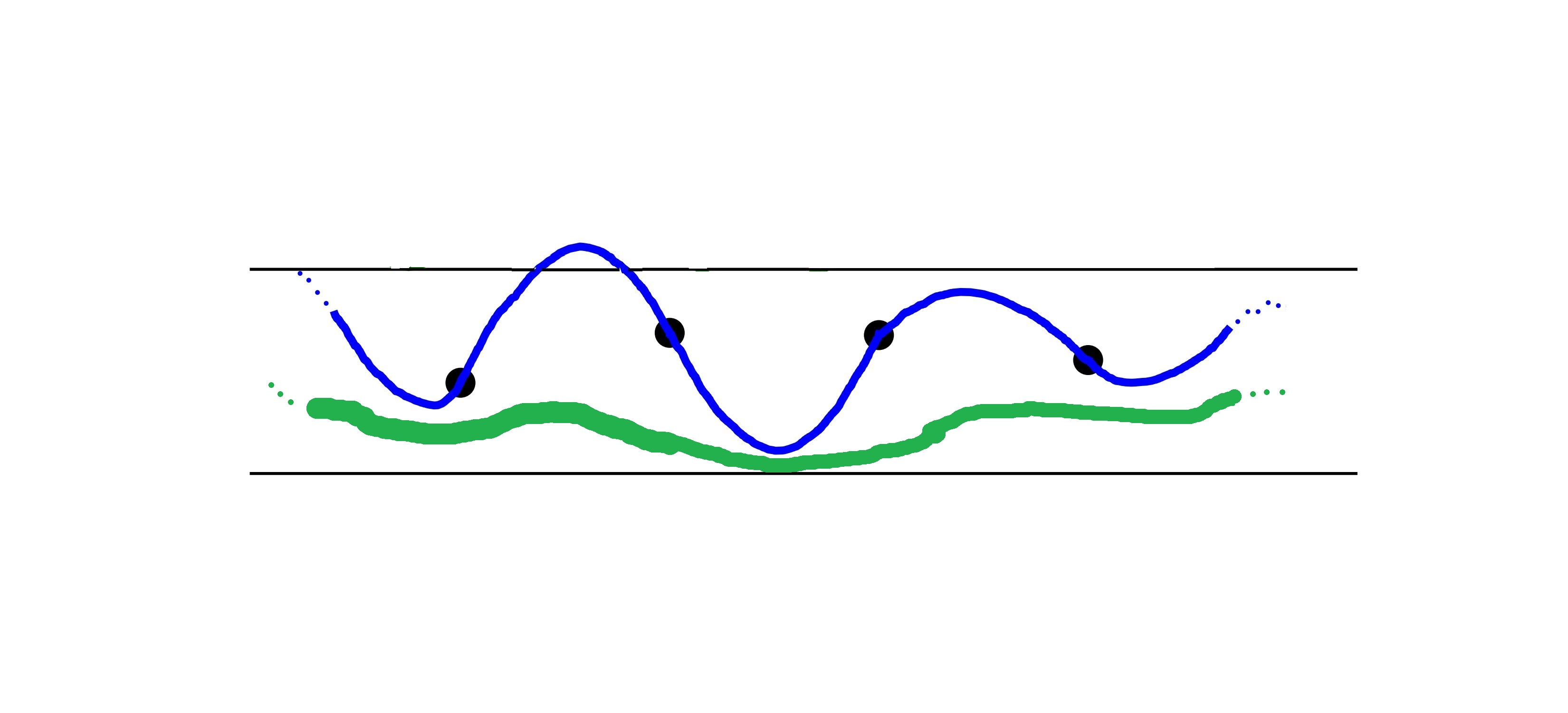}
\put(14,10){$\ell$}
\put(14,23){$\ell^+$}
\put(83,24){\color{myblue}$C$\color{black}}
\put(83,17){\color{mygreen}$\hat C$\color{black}}
\end{overpic}
}
\end{minipage}
\caption{Subcases 1 and 2.}
\label{abra}
\end{figure}

\smallskip
{\bf Case 3: $S(\ell)\cap V(\HP_\ell(\omega)^\infty)$ is infinite, and for every infinite cluster $C$ of $\HP_\ell(\omega)^\infty$, $V(C)\cap S(\ell)$ is finite.} In particular, $\HP_\ell(\omega)^\infty$ has infinitely many infinite clusters. Let us first assume (A). As $\omega$ has only finitely many infinite clusters, infinitely many infinite clusters of $\HP_\ell(\omega)^\infty$ have to belong to the same $\omega$-cluster $K$. In particular, there are distinct clusters $C_1,C_2$ in $\HP_\ell(\omega)^\infty$, a cluster $\hat C$ in $\HP_\ell(\omega^\star)^\infty$ that separates them, and
a biinfinite path in $\omega$ that has finitely many vertices outside of $\HP_\ell$ and one ray in $C_1$, the other one in $C_2$. Then $\hat C$ is in an $\omega^\star$-cluster that has finitely many lowermost vertices. This contradicts the MTP applied to $G^\star$. Now assume (B), and apply the same argument, after inserting a path between distinct clusters $C_1,C_2$ of $\HP_\ell(\omega)^\infty$ if necessary, to make them be part of the same cluster of $\omega$.


\medskip
{\bf Suppose now that $G$ is a hyperbolic URM in the hyperbolic plane.} Then half-plane coexistence follows from Theorem \ref{hyper}.
\end{proofof}\qed

For Theorem \ref{hyper}, we will need a few lemmas. The next one is a consequence of the continuous version of the Mass Transport Principle, Theorem 5.2 in \cite{BS}.

\begin{lemma}\label{teljes}
Let $K\subset \HH^2$ be a random closed convex set whose law is invariant under the isometries of $\HH^2$. Then $K\in\{\emptyset,\HH^2\}$ almost surely.
\end{lemma}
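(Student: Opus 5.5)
Suppose $K$ is a nonempty proper closed convex subset with positive probability. By ergodic decomposition, or simply by conditioning on the invariant event $\{K\neq\emptyset,\HH^2\}$, we may assume this happens almost surely. The plan is to derive a contradiction with the continuous Mass Transport Principle (Theorem 5.2 in \cite{BS}). That principle says that for a diagonally invariant random measurable function $f(x,y)\ge 0$ on $\HH^2\times\HH^2$, we have $\int \E f(o,y)\,dy=\int\E f(x,o)\,dx$ for a fixed point $o$.

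First, suppose $K$ has nonempty interior. Then $K$ has a nonempty boundary $\partial K$, and every point of $\HH^2\setminus K$ has a unique nearest-point projection $\pi(x)\in\partial K$. Consider the transport in which each $x\notin K$ sends unit mass, spread uniformly over the unit ball around $\pi(x)$, so it is a density. Under this transport, the mass sent out from $o$ is at most $1$. The mass received by $o$ is the hyperbolic area of the set of points whose projection lies within distance $1$ of $o$.

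The key geometric step is to show that this area is infinite whenever $o$ is within distance $1$ of $\partial K$. Take a supporting line $L$ at a boundary point $p$ near $o$. The preimage $\pi^{-1}(p)$ contains the outward normal geodesic ray from $p$. The preimage of a boundary arc of length of order $1$ around $p$ contains a neighbourhood of this ray, and its width grows like $\cosh$ of the distance along the ray, by the exponential divergence of geodesics in $\HH^2$. Hence its area is infinite. If instead the boundary arc degenerates to a corner at $p$, the preimage contains a whole sector of the normal cone, which again has infinite area.

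Since $o$ lies within distance $1$ of $\partial K$ with positive probability, by invariance and the positive area of the $1$-neighbourhood of $\partial K$, the expected incoming mass is infinite while the outgoing mass is finite. This contradicts the MTP.

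If $K$ has empty interior, then $K$ is a point, a geodesic segment, a ray, or a full geodesic. A bounded $K$ is excluded directly: let every point send mass $1$ spread over the unit ball around the hyperbolic barycenter of $K$ (or around its endpoint). Then infinite mass is received near a single point. For a ray, send mass to the unit ball around its endpoint in the same way.

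For a full geodesic, the nearest-point projection again has infinite-area fibers over each unit segment: the set of points projecting to a segment of length $1$ is a strip whose width grows exponentially in the distance from the geodesic. So the same transport gives infinite incoming mass near the geodesic, and hence near $o$ with positive probability. Alternatively, the same argument applies uniformly to all cases by using the projection onto $K$ throughout.

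I expect the main obstacle to be the uniform geometric estimate showing that the expected received mass is infinite. We need a bound that holds for all convex $K$ and does not depend on the shape of $\partial K$, together with the measurability of the transport. I would handle this with the supporting half-plane comparison: $\pi^{-1}(B(p,1)\cap\partial K)$ contains $\pi_L^{-1}(B(p,1)\cap L)\setminus H$ for the supporting half-plane $H\supseteq K$. This reduces the estimate to the explicit half-plane case, where the computation is exact.
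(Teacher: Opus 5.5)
Your proposal is correct and follows the same route as the paper: transport mass along the nearest-point projection onto $K$, note that the fibres over boundary points near $o$ have infinite area, and contradict the continuous Mass Transport Principle. Your version also repairs two loose points in the paper's sketch, and the supporting half-plane comparison you rely on does hold.
\begin{itemize}
\item The paper sends point masses rather than densities, which the continuous principle does not directly accept; your smearing over a unit ball fixes this.
\item The paper uses the event $o\in\partial K$, which has probability zero, and over a smooth boundary point the fibre is only a ray of zero area; your use of $d(o,\partial K)<1$ and a boundary arc fixes this.
\item For the comparison, set $y=\pi_K(x)$ and $q=\pi_L(x)$. The angle of the triangle $x\,y\,p$ at $y$ is at least $\pi/2$, and $d(x,y)\ge d(x,q)$. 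Hence $\cosh d(x,q)\cosh d(q,p)=\cosh d(x,p)\ge\cosh d(x,y)\cosh d(y,p)\ge\cosh d(x,q)\cosh d(y,p)$, which gives $d(y,p)\le d(q,p)$.
\end{itemize}
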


\begin{proof} Assume with positive probability that $\emptyset \neq K \neq \mathbb H^2$. Fix $o\in\HH^2$. Since $K$ is closed and convex, the distance from $o$ to $\partial K$ is attained at a unique point of $K$ almost surely.
Define an isometry-invariant mass transport by letting each point of $\HH^2$ send one unit of mass to the point of $K$ closest to it.
The point $o$ sends exactly one unit of mass. However, if $o\in\partial K$, it receives mass from all points whose closest boundary point is $o$, a set of infinite hyperbolic area. Thus the expected mass received at $o$ is infinite, contradicting the continuous Mass Transport Principle. Hence the assumption was false.
\end{proof}\qed

\begin{lemma}\label{forest}
Suppose that $\omega$ has some cluster with infinitely many ends. Then there is an invariant subforest $F$ in $\omega$ that has some cluster with infinitely many ends, and for any three ends of such a cluster there is a biinfinite path $P$ representing two of these ends such that both components of $\HH^2\setminus P$ contain an infinite cluster from $\omega^\star$.
\end{lemma}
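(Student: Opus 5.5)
The plan has two independent parts: first extract an invariant forest with infinitely-ended trees from $\omega$, and then use hyperbolic geometry at infinity to locate infinite clusters of $\omega^\star$ in the sectors cut out by three rays of such a tree.

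\emph{The forest.} We are in the setting of Theorem \ref{hyper}, so $G$ is a URM in $\HH^2$. Suppose the cluster $K$ of $\omega$ has infinitely many ends. By the standard theory of invariant percolation on unimodular graphs (\cite{BLPS, AL}), an infinitely-ended cluster of an invariant percolation is invariantly nonamenable. Such a cluster therefore carries an invariant spanning subforest whose trees have infinitely many ends with positive probability. Concretely, I would attach i.i.d.\ uniform labels to the edges of $\omega$ and take the free minimal spanning forest inside each cluster. If that fails, I would fall back on the invariant-forest construction of \cite{BLPS} for nonamenable clusters. In either case, $F\subset\omega$ is jointly invariant with $(G,\omega)$, so the MTP remains available for the decorated map.

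\emph{Reduction to sectors.} Fix a tree $T$ of $F$ with infinitely many ends, and three of its ends. These are represented by three disjoint rays $R_1,R_2,R_3$ emanating from a common branch vertex $v$. Their union is a tripod, which splits $\HH^2$ into three sectors $A_{12},A_{23},A_{13}$, where $A_{ij}$ is bounded by $R_i\cup R_j$. Let $P_{ij}=R_i\cup R_j$, a biinfinite path. One component of $\HH^2\setminus P_{ij}$ is $A_{ij}$, and the other contains the remaining two sectors. Suppose at least two sectors, say $A_{12}$ and $A_{23}$, each contain an infinite $\omega^\star$-cluster. Then $P_{12}$ has an infinite $\omega^\star$-cluster on each side. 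Since $F\subset\omega$, no edge of $\omega^\star$ crosses an edge of $T$. Hence every $\omega^\star$-cluster lies entirely in one sector, and it suffices to show that every sector contains an infinite $\omega^\star$-cluster.

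\emph{Geometry at infinity.} First I would show that each ray of $T$ converges to a point of $\partial\HH^2$, and that distinct ends give distinct boundary points. Then the three rays determine three distinct points $\xi_1,\xi_2,\xi_3$ on the circle, and the sector $A_{ij}$ contains, near infinity, a neighbourhood of the open boundary arc between $\xi_i$ and $\xi_j$ that avoids the third point. Next, let $\Lambda$ be the set of accumulation points on $\partial\HH^2$ of the union of infinite $\omega^\star$-clusters. This set is nonempty because of bounded faces and no accumulation points. Its closed convex hull is an invariant random closed convex set, and it is nonempty whenever $\Lambda$ has at least two points (which should follow from the MTP). By Lemma \ref{teljes} the hull is all of $\HH^2$, so $\Lambda$ is dense in $\partial\HH^2$. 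Hence some infinite $\omega^\star$-cluster accumulates at a point strictly inside each arc $(\xi_i,\xi_j)$. Such a cluster is connected and cannot cross the tripod, so it lies in $A_{ij}$, which completes the argument.

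The main obstacle is the convergence claim: rays of $T$ converge to boundary points, and distinct ends give distinct limits. A ray in a URM need not be quasi-geodesic, so it could oscillate, or two ends could converge to the same boundary point. I would handle this with the same convex-hull device. For the limit sets of the trees of $F$, the invariant convex set in Lemma \ref{teljes} forces these sets to be large. A mass transport in the spirit of Lemma \ref{teljes} should then rule out nontrivial boundary arcs approached by a single end, or shared between two ends. If this only works for most triples of ends, the fallback is to replace the three given rays by other rays representing the same ends that do converge.
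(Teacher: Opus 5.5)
\textbf{There is a genuine gap, and it is the step you flag as the main obstacle.} Your reduction to sectors is sound. Granting that infinite $\omega^\star$-clusters exist, the density of $\Lambda$ via Lemma \ref{teljes} is also fine. Existence should be derived from the infinitely-ended $\omega$-cluster via Lemma \ref{2ended}, not from bounded faces.

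Everything then hinges on one claim: the closure of the tripod $R_1\cup R_2\cup R_3$ in $\HH^2\cup\partial\HH^2$ must meet $\partial\HH^2$ in finitely many points, not all equal. The argument survives if two limits coincide. It fails if some ray has a nondegenerate limit arc, or if all three rays converge to the same point. In the latter case only one sector touches an arc of $\partial\HH^2$, and density of $\Lambda$ says nothing about the other two.

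Your construction of $F$ gives no control here. Trees of an FMSF or a BLPS-type forest are not quasi-isometrically embedded, so nothing prevents rays that spiral or run parallel. Moreover, the lemma quantifies over \emph{every} triple of ends of a tree, an uncountable family. Mass-transport and Lemma \ref{teljes}-type arguments only control invariantly defined objects, such as the limit set of a whole tree or ends typical for some invariant measure. They cannot certify convergence of every end.

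Your fallback is vacuous. In a tree, any two rays representing the same end share a tail, so convergence is a property of the end itself and cannot be repaired by changing the representative. (Incidentally, that FMSF trees of an infinitely-ended cluster have infinitely many ends is not a known general fact; the BLPS-type forest is the safe choice.)

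\textbf{The missing idea is to build $F$ so that separation is forced combinatorially, with no geometry at infinity.} The paper works as follows.
\begin{itemize}
\item It selects an invariant family $\calB$ of pairwise disjoint furcation balls of $\omega$.
\item For two new infinite components $C_1,C_2$ of $\omega\setminus B$, Lemma \ref{2ended} gives two infinite $\omega^\star$-clusters $S_1(C_1,C_2)$, $S_2(C_1,C_2)$ separating them in $\HH^2\setminus B$.
\item It takes $F_0$ to be the WUSF of $\omega\setminus\bigcup_{B\in\calB}B$, whose trees have at most two ends, and adds a spanning tree inside each ball to get $F$.
\end{itemize}
Since $F_0$-trees carry at most two ends, for any three ends of a tree of $F$, two of them go into different components $C_1,C_2$ of $\omega\setminus B$ for some $B\in\calB$. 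Any biinfinite path $P$ with all but finitely many vertices in $C_1\cup C_2$ then has $S_1$ on one side and $S_2$ on the other. This handles every triple of ends at once, because branching of $F$ only happens at places where dual separators are built in. A generic invariant forest with uncontrolled geometry at infinity cannot deliver that.
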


\begin{proof}
Call a ball $B(x,r)$ of radius $r$ around $x$ in the graph $\omega$ a {\it furcation ball} if its removal from $\omega$ results in at least 3 new infinite clusters. Whenever $B$ is a furcation ball and $C_1$ and $C_2$ are two new infinite clusters in $\omega\setminus B$, there are two infinite clusters $S_1(C_1,C_2)$ and $S_2(C_1,C_2)$ in $\omega^\star$ that separate $C_1$ and $C_2$ in $\HH^2\setminus B$ (because of Lemma \ref{2ended}, there is an infinite cluster in $\omega^\star\cup\{e^\star: e \text{ has an endpoint in } B
\}$ that separates $C_1$ and $C_2$).
If $r$ is large enough, there exists a furcation ball, and by the MTP there are infinitely many. Fix $r$ and select a nonempty invariant set $\calB$ of pairwise disjoint furcation balls. Let $F_0$ be the wired uniform spanning forest of $\omega\setminus\cup_{B\in \calB} B$. Every cluster of $F_0$ has at most 2 ends \cite{AL}. Take a random uniform spanning tree in each of the balls $B\in\calB$, and let $F$ be the union of these and $F_0$. There is some component of infinitely many ends in $F$, because every furcation ball splits its component to at least 3 infinite parts. Since $F_0$ had clusters of at most 2 ends, for any three disjoint infinite paths $P_1,P_2,P_3$ in $F$ there are distinct $i,j\in\{1,2,3\}$ such that $P_i$ and $P_j$ are in different components of $\omega\setminus\cup_{B\in \calB} B$, and hence in different components of $\omega\setminus B$ for some $B\in \calB$. Calling these components $C_1$ and $C_2$, consider any biinfinite path $P$ with all but finitely many vertices in $C_1\cup C_2$. Then one component of $\HH^2\setminus P$ contains $S_1(C_1,C_2)$, and the other one contains $S_2(C_1,C_2)$.
\end{proof}\qed

\begin{lemma}\label{ends}
Let $\omega$ be a random subgraph of $\HH^2$ given by an invariant percolation. Suppose 
that $\omega$ has some cluster with infinitely many ends. Then the restriction $\HP (\omega)$ of $\omega$ to a fixed halfplane $\HP$ has some infinite cluster, and the restriction of its dual $\omega^\star$ to $\HP$ also contains some infinite cluster almost surely.
\end{lemma}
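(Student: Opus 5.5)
We use Lemma \ref{forest} to pass to an invariant subforest $F\subseteq\omega$ with a cluster of infinitely many ends. For every three ends of such a cluster, this subforest supplies a biinfinite path $P$ through two of them such that both sides of $P$ contain infinite $\omega^\star$-clusters. The plan is to locate such a separating configuration inside the fixed half-plane $\HP$, and to use the geometry of $\HH^2$ to do so. We view the ends of $F$ as accumulating on the ideal boundary $\partial\HH^2$.

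The first step is to show that the limit set $\Lambda$ of the infinite-ended components of $F$ is all of $\partial\HH^2$. Let $K$ be the closed convex hull in $\HH^2$ of $\Lambda$, or more robustly of the union of the convex hulls of the ends of all infinite-ended $F$-clusters. This $K$ is an isometry-invariant random closed convex set, and it is nonempty because an infinitely-ended cluster has at least three distinct limit points. By Lemma \ref{teljes}, $K=\HH^2$ almost surely, so $\Lambda$ is dense in $\partial\HH^2$. Convergence of ends to boundary points needs some care. Since the embedding has bounded faces and no accumulation points, every ray in $\HH^2$ leaves compact sets, but a ray could a priori oscillate. I would handle this with an invariant choice: for each cluster, use only rays that are geodesic in the tree $F$ and converge in the Gromov/visual sense, and argue via the MTP (or \cite{BS}) that such rays exist and give distinct limit points. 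I expect this to be the main obstacle.

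Next, let $I\subset\partial\HH^2$ be the open boundary arc of $\HP$. Density of $\Lambda$ alone is not enough; we need a single cluster with three ends landing in a smaller sub-arc $J\subset I$. I would apply the convex-hull argument to each individual infinite-ended cluster: the set of points of $\HH^2$ lying in the convex hull of the limit set of the cluster through them is invariant, so by the same MTP reasoning every such cluster has limit set equal to $\partial\HH^2$ (the cluster-level version of Lemma \ref{teljes}). Hence some cluster $T$ of $F$ has three ends converging to points of $J$. Lemma \ref{forest} then gives a biinfinite path $P\subseteq T$ whose two ends converge into $J$, with infinite $\omega^\star$-clusters $\hat C_1,\hat C_2$ on the two sides of $P$.

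Finally we pull everything into $\HP$. Since $P$ has both ends in $J$, which is compactly inside $I$, all but finitely many vertices of $P$ lie in $\HP$. The bounded side of $P$, the one cut off toward $J$, is asymptotically contained in $\HP$, so the infinite $\omega^\star$-cluster $\hat C$ on that side lies in $\HP$ apart from finitely many vertices. Each of its tails inside $\HP$ is then an infinite cluster of the restriction, so the restriction of $\omega^\star$ to $\HP$ has an infinite cluster. For the primal claim, a ray of $P$ is eventually in $\HP$, which gives an infinite cluster of $\HP(\omega)$. The remaining technical point is to make sure the chosen side really lies in the region bounded by $P$ and $J$, rather than the complementary side that meets $\partial\HH^2\setminus J$. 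Here I would use that both sides contain infinite dual clusters, choose the side facing $J$, and show its dual cluster cannot escape through the finitely many vertices of $P$ outside $\HP$.
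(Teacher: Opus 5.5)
Your closing step is sound. If some cluster of $F$ had three ends converging to points of a sub-arc $J$ compactly inside the ideal arc of $\HP$, then the part of the $J$-facing side of $P$ lying outside $\HP$ is bounded, and both conclusions follow. The gap is in getting there. The decisive step, your ``cluster-level version of Lemma \ref{teljes}'', is false. Lemma \ref{teljes} needs a \emph{single} invariant closed convex set. The set of points lying in the hull of the limit set of their own cluster is invariant but not convex. An invariant family of pairwise disjoint proper convex sets gives no mass-transport contradiction.

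Concretely, take a uniformly random isometric copy of the Farey tessellation; this is possible because the modular surface has finite area. Run Bernoulli$(p)$ percolation with $\frac12<p<1$ on its dual $3$-regular tree. Let $\omega$ keep only the edges of a fine enough $G$ lying inside the ideal polygons spanned by the infinite dual-tree clusters. These polygons are convex, and their ideal boundaries miss the open arcs cut off by their sides. Hence every cluster of $\omega$, and so of $F\subseteq\omega$, has limit set a proper subset of $\partial\HH^2$, while infinitely-ended clusters still exist. So your first step only yields density of $\bigcup_T\Lambda(T)$. As you note yourself, that does not give one cluster with three ends landing in $J$.

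The second gap is the one you flag. Nothing in the hypotheses makes a ray of $F$ converge in $\overline{\HH^2}$; it may accumulate on a whole arc. Even convergent rays of distinct ends may share a limit point, for example rays along disjoint hypercycles of one geodesic. So ``an infinitely-ended cluster has at least three distinct limit points'' is unjustified. There is also no freedom to choose ``tree-geodesic'' rays, since in a tree the ray from a vertex to an end is unique. Running your hull argument on the union of the accumulation points of the sets $\Lambda(T)$ would repair the first gap, but only once this one is settled.

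The paper avoids the ideal boundary entirely. It only needs some cluster of $\HP(F)$ with at least three ends, and proves this by contradiction. Suppose that with positive probability every cluster of $F$ restricted to a half-plane has at most two ends, and call such half-planes finitary. Take the half-planes bounded by the lines of a Poisson line process. The complement of the union of the finitary ones is an intersection of closed half-planes, i.e.\ a genuine single invariant closed convex set, so Lemma \ref{teljes} does apply. This yields an invariant exhaustion of $F$ by subforests whose components have at most two ends. That contradicts $F$ having infinitely many ends, via expected degrees in the backbone.
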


\begin{proof} 
Let $F$ be the subforest of $\omega$ given by Lemma \ref{forest}. It is enough to prove that $\HP(F)$ has at least 3 ends, because then by Lemma \ref{forest} there is a biinfinite path $P$ in $\HP(F)\subset \HP(\omega)$ such that the component of $\HH^2\setminus P$ inside $\HP$ contains an infinite cluster from $\omega^\star$.

Suppose now that with probability $p>0$ all component trees of $\HP (F)$ have only 0, 1 or 2 ends. Consider a subcritical Poisson line percolation in the hyperbolic plane $\HH$, see \cite{P, BGR} for background.  All we are using is that this is an ergodic isometry-invariant random set $\ELL$ of countably many lines such that 
every bounded line segment in $\HH$ is intersected by only finitely many elements of $\ELL$. For every $L\in\ELL$ take an iid uniform $[0,1]$ random variable $\xi_L$, and let $\ELL _c:=\{L\in\ELL, \xi_L\leq c\}$ for every $c\in[0,1]$. Each $L\in\ELL$ divides $\HH$ into two open half-planes $\HP_L^1,\HP_L^2$.
Now, all the $\HP_L^i$ ($i\in\{1,2\}$) are isometric to the fixed half-plane $\HP$, hence by isometry-invariance the distribution of $F$ restricted to any of the $\HP_L^i$ is the same. 
Call $\HP_L^i$ finitary when all the clusters of $\HP_L^i(F)$ have 0, 1 or 2 ends.
Consider the union of all the finitary half-planes. This is an invariant random subset of $\HH^2$ that is nonempty with probability at least $p$. Apply Lemma \ref{teljes} to its complement, to obtain that the union of finitary half-planes is $\HP$ with probability at least $p$, and hence (by ergodicity) almost surely. Therefore the restriction of $F$ to any connected component of $\HH\setminus \cup_{L\in\ELL_c} L$ has only clusters with $\leq 2$ ends almost surely, because it is the intersection of the forests $\HH_L^i (F)$ ($L\in\ELL_c$, $i\in\{1,2\}$) inside the cluster, and they all have at most 2 ends. Taking a monotone sequence $c_n\to 0$, we get an exhaustion of $F$ by {\it invariant} subforests of clusters that have at most 2 ends. This is not possible, because $F$ is has infinitely many ends. (The expected degrees in the backbone of $F$ are strictly greater than 2, while the expected degrees in the restrictions in the exhaustions are all less than or equal to 2. Here the backbone refers to the subgraph induced by edges of $F$ that are contained in a biinfinite path of $F$.)
\end{proof}\qed

\begin{proofof}{Theorem \ref{hyper}}
By assumption both $\omega$ and $\omega^\star$ have some infinite clusters. Thus either $\omega$ or $\omega^\star$ must have infinitely many infinite clusters (see Corollary 3.6 in `cite{BS}, which generalizes to URM's). By symmetry we may assume it is $\omega$. Then a large enough ball in $\HH^2$ intersects at least 3 infinite components in $\omega$, showing that there exist furcation balls in the dual. Hence $\omega^\star$ has a cluster with infinitely many ends.
Apply Lemma \ref{ends}.
\end{proofof}\qed

\bigskip

\noindent
{\bf Acknowledgments:} I am grateful to G\'abor Pete for stimulating conversations. I also thank P\'eter Mester for useful comments. This research was partially supported by Icelandic  Research Fund grant 239736-051 and the ERC
grant No. 810115-DYNASNET.

\ \\

\noindent
{\bf \'Ad\'am Tim\'ar}\\
Division of Mathematics, The Science Institute, University of Iceland\\
Dunhaga 3 IS-107 Reykjavik, Iceland\\
and\\
Alfr\'ed R\'enyi Institute of Mathematics\\
Re\'altanoda u. 13-15, Budapest 1053 Hungary\\
\texttt{madaramit[at]gmail.com}\\

\end{document}